\newtheorem{theorem}{Theorem}[section]
\theoremstyle{definition}
\newtheorem{conj}[theorem]{Conjecture}
\theoremstyle{remark}
\numberwithin{equation}{section}
\def\im{\mathop\mathrm{Im}\nolimits}
\def\Hom{\mathop\mathrm{Hom}\nolimits}
\def\Ext{\mathop\mathrm{Ext}\nolimits}
\def\CM{\mathop\mathsf{CM}\nolimits}
\def\al{\alpha}			\def\be{\beta}
\def\la{\lambda}		\def\de{\delta}
\def\eps{\varepsilon}
\def\aK{\mathbbm{k}}	\def\mN{\mathbb{N}}
\def\fW{\mathbf{w}}		\def\fE{\mathbf{e}}
\def\fU{\mathbf{u}}
\def\qR{{\boldsymbol R}}		\def\qS{{\boldsymbol S}}
\def\qI{{\boldsymbol I}}		\def\qE{{\boldsymbol E}}
\def\qK{{\boldsymbol K}}
\def\rX{\mathrm{X}}
\def\cE{\mathcal E}
\def\kE{\mathcal{E}} 	\def\kF{\mathcal{F}}
\def\bop{\bigoplus}
\def\+{\oplus}
\def\*{\otimes}
\def\8{\infty}
\def\mt{\mbox{-}}
\def\sbe{\subseteq}
\def\xx{\times}
\def\mtr#1{\begin{pmatrix}#1\end{pmatrix}}
 \newcommand{\hhh}{\rule{0pt}{9pt}}
\def\cm{Cohen--Macaulay}
\def\iff{if and only if }
\begin{document}

\title[Cohen-Macaulay modules over $T_{44}$]{On Cohen-Macaulay modules over the plane curve singularity of type $T_{44}$}

\author[Y. Drozd]{Yuriy A. Drozd}
\address{Institute of Mathematics, National Academy of Sciences, 01601 Kyiv, Ukraine}
\email{y.a.drozd@gmail.com,\,drozd@imath.kiev.ua}
\urladdr{http://www.imath.kiev.ua/$\sim$drozd}

\author[O. Tovpyha]{Oleksii Tovpyha}
\address{Institute of Mathematics, National Academy of Sciences, 01601 Kyiv, Ukraine}
\email{tovpyha@gmail.com}

\subjclass{13C14, 13H10, 14J60}
\keywords{Cohen--Macaulay modules, matrix factorizations, bimodule problems, bunches of chains}

\begin{abstract}
For a wide class of Cohen--Macaulay modules over the local ring of the plane curve singularity of type $T_{44}$ we explicitly describe the 
corresponding matrix factorizations. The calculations are based on the technique of matrix problems, in particular, representations of bunches 
of chains.
\end{abstract}
\maketitle

\section{Introduction}
\label{s1} 

 Let $\aK$ be an algebraically closed field, $\qS=\aK[[X,Y]]$. Recall that the complete local ring of a plane curve singularity of type $T_{44}$
 is $\qR=\qS/(F)$, where $F=XY(X-Y)(X-\la Y)$ and $\la\in\aK\setminus\{0,1\}$. In \cite{die} a classification of maximal \cm\ modules over the 
 ring $\qR$ was 
 obtained in terms of the Auslander--Reiten quiver. Another way to get such a classification was proposed in \cite{dg} through the technique
 of \emph{matrix problems} and in \cite{bikr} using cluster-tilting. Nevertheless, in these papers there was no explicit description of these 
 modules in terms of generators and relations, or, the same, \emph{matrix factorizations} of the polynomial $F$ \cite{eis,yo}. In this paper we 
 present such a description for a wide class of $\qR$-modules. 
 
 We consider $\qR$ as the subring of the direct product $\qR_1\xx\qR_2\xx\qR_3\xx\qR_4$, where all $\qR_i=\aK[[t]]$, generated by the 
 elements $x=(t,0,t,\la t)$ and $y=(0,t,t,t)$.  
 We denote by $\qR_{ij}$ the projection of $\qR$ to $\qR_i\xx\qR_j$. All rings $\qR_{ij}$ are isomorphic to $\aK[[X,Y]]/(XY)$, hence all
 indecomposable $\qR_{ij}$-modules are $\qR_i,\,\qR_j$ and $\qR_{ij}$. 
 Let $\qK_i\simeq\aK((t))$ be the field of fractions of $\qR_i$, $\qK_{ij}=\qK_i\xx\qK_j$. Every \cm\ $\qR$-module $M$ embeds into 
 $\qK\*_\qR M$. Denote by $N$ the image of $M$ under the projection $\qK\*_\qR M\to\qK_{12}\*_\qR M$  
 and by $L$ be the kernel of the surjection $M\to N$. Then $N\in\CM(\qR_{12})$ and $L\in\CM(\qR_{34})$.
  The following result, though elementary, is the background of this paper.

 \begin{theorem}\label{bimod} 
  There is an equivalence of the category $\CM(\qR)$ with the category $\cE$ of elements of the $\CM(\qR_{12})\mt\CM(\qR_{34})$-bimodule
  $\Ext^1_\qR$ \emph{(in the sense of \cite{sur})}.
 \end{theorem}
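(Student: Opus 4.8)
The plan is to exhibit the claimed equivalence concretely via the short exact sequence $0 \to L \to M \to N \to 0$ constructed in the paragraph preceding the statement, and to recognize the data $(N, L, \text{extension class})$ as precisely an element of the bimodule $\Ext^1_\qR$ in the sense of the matrix-problem formalism of \cite{sur}.

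First I would set up the functor from $\CM(\qR)$ to $\cE$. Given $M \in \CM(\qR)$, one forms $N = \im(\qK_{12}\*_\qR M \gets M)$ and $L = \Ker(M \twoheadrightarrow N)$. The two facts asserted in the excerpt — that $N \in \CM(\qR_{12})$ and $L \in \CM(\qR_{34})$ — must be checked: $N$ is a submodule of a finite-dimensional $\qK_{12}$-vector space and is finitely generated over $\qR$, hence over $\qR_{12}$, and it is torsion-free over $\qR_{12}$ since it sits inside $\qK_{12}\*M$, so it is \cm\ over the one-dimensional ring $\qR_{12}$; dually, $L$ is the kernel of the localization-at-$\qK_{12}$ map, so $\qK_{12}\*_\qR L = 0$, which forces $L$ to be supported on the components $3,4$, i.e.\ an $\qR_{34}$-module, and it is torsion-free because it is a submodule of $M$ and $\qR$ is reduced. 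The short exact sequence then defines an element of $\Ext^1_\qR(N,L)$, and since every $\qR$-homomorphism $M \to M'$ carries the canonical subobject $L$ into $L'$ (because it commutes with localization at $\qK_{12}$) and induces $N \to N'$, we get a morphism of extensions. This gives a functor $\CM(\qR) \to \cE$.

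Next I would construct the quasi-inverse. An object of $\cE$ is a triple $(N, L, \xi)$ with $N \in \CM(\qR_{12})$, $L \in \CM(\qR_{34})$, and $\xi \in \Ext^1_\qR(N,L)$ (all three viewed as $\qR$-modules via the projections); the extension $0 \to L \to M \to N \to 0$ recovers $M$, and one must verify $M \in \CM(\qR)$. Depth is the issue: $L$ and $N$ have depth $1$ over $\qR$ (they are \cm\ of dimension $1$), so the depth lemma applied to the short exact sequence gives $\depth_\qR M \ge 1 = \dim \qR$, hence $M$ is \cm. One then checks that the two constructions are mutually inverse up to natural isomorphism: starting from $M$, the recovered submodule $L$ and quotient $N$ are canonical, so $M \mapsto (N,L,\xi) \mapsto M$ is the identity; conversely, for a triple, localizing the extension at $\qK_{12}$ kills $L$ and identifies the image of $M$ in $\qK_{12}\*M$ with $N$, so the two functors compose to the identity on $\cE$ as well. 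Functoriality in morphisms is automatic from the long exact sequence of $\Ext$ and the definition of morphisms in $\cE$.

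The main obstacle, I expect, is not any single hard computation but rather aligning the ad hoc bimodule structure on $\Ext^1_\qR$ with the precise axioms of ``category of elements of a bimodule'' from \cite{sur}: one must specify how $\CM(\qR_{12})$ acts on $\Ext^1_\qR$ on one side and $\CM(\qR_{34})$ on the other (via the functorial maps $\Ext^1_\qR(N,L) \to \Ext^1_\qR(N',L)$ for $N' \to N$ and $\Ext^1_\qR(N,L) \to \Ext^1_\qR(N,L')$ for $L \to L'$, restricted along the ring projections $\qR \to \qR_{12}$, $\qR \to \qR_{34}$), and confirm that morphisms in $\cE$ are exactly pairs $(\phi: N \to N', \psi: L \to L')$ with $\psi \cdot \xi = \xi' \cdot \phi$ in $\Ext^1_\qR(N,L')$. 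Once this bookkeeping is in place, the equivalence is a formal consequence of the canonical nature of the sub/quotient decomposition of $M$ and the depth computation above; I would also remark that $\qK\*_\qR M \cong (\qK_{12}\*_\qR M) \oplus (\qK_{34}\*_\qR M)$ canonically, which is what makes the decomposition $M \rightsquigarrow (N,L)$ well defined and independent of choices.
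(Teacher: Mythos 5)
Your proposal is correct and follows essentially the same route as the paper: both build the equivalence from the canonical exact sequence $0\to L\to M\to N\to 0$ and its class $\xi\in\Ext^1_\qR(N,L)$, with morphisms $(\al,\be)$ satisfying $\be\xi=\xi'\al$ recovered via the pullback--pushout diagram. You merely supply some details the paper leaves implicit (that $N$, $L$, and the middle term of an arbitrary extension are Cohen--Macaulay, via torsion-freeness and the depth lemma), which is a reasonable elaboration rather than a different argument.
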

 \begin{proof}
  We have seen that for every $M\in\CM(\qR)$ there is an exact sequence $0\to L\to M\to N\to 0$, where 
  $N\in\CM(\qR_{12}),\,L\in\CM(\qR_{34})$. It defines an element $\xi\in\Ext^1_\qR(N,L)$. Let $0\to L'\to M'\to N'$ be another
  exact sequence with $N'\in\CM(\qR_{12}),\,L'\in\CM(\qR_{34})$, $\xi'\in\Ext^1_\qR(N',L')$ be the corresponding element and
  $f\in\Hom_\qR(M,M')$. Then $f(L)\sbe L'$, so we obtain a commutative diagram
   \[
   \xymatrix{0\ar[r] & L \ar[r]\ar[d]^\be  & M \ar[r]\ar[d]^f  & N \ar[r]\ar[d]^\al &0 \\
    0\ar[r] & L' \ar[r]  & M' \ar[r]  & N' \ar[r] &0 }
  \]
  It implies that $\be\xi=\xi'\al$, hence the pair $(\al,\be)$ is a morphism $\xi\to\xi'$ in the category $\cE$. On the contrary, given $(\al,\be)$
  such that $\be\xi=\xi'\al$, we obtain a commutative diagram
   \[
   \xymatrix{0\ar[r] & L \ar[r]\ar[d]^\be  & M \ar[r]\ar[d]^{\be'}  & N \ar[r]\ar@{=}[d] &0 \\
   0\ar[r] & L' \ar[r]\ar@{=}[d]  & \tilde{M}  \ar[r]\ar[d]^{\al'} & N \ar[r]\ar[d]^\al & 0\\
    0\ar[r] & L' \ar[r]  & M' \ar[r]  & N' \ar[r] &0 }
  \]
  hence a homomorphism $f=\al'\be':M\to M'$. Obviously, $f=0$ \iff $\al=\be=0$.
 \end{proof}
 
 In the next section we calculate this bimodule and present the result as a ``matrix problem''. A part of this problems turns to be a sort
 of \emph{representations of bunch of chains} \cite{bo,db}. So we can describe all indecomposable objects in this case. Then we
 calculate generators and relations for the corresponding modules, thus obtaining matrix factorizations.
  
 \section{Calculation of $\,\Ext^1$}
 \label{s2} 
 
 Recall that, if $L=\bop_iL_i$ and $N=\bop_jN_j$, an element $\xi\in\Ext^1_\qR(N,L)$ can be considered as a matrix $(\xi_{ij})$,
 where $\xi_{ij}\in\Ext^1_\qR(N_j,L_i)$. Namely, we just set $\xi_{ij}=\pi_i\xi\iota_j$, where $\pi_i$ is the projection of $L$ onto $L_i$
 and $\iota_j$ is the embedding $N_j\to N$. If we present in analogous matrix form the endomorphisms of $N$ and $L$, their action
 on $\Ext^1_\qR(N,L)$ corresponds to the usual matrix multiplication.
 
 Let $N\in\CM(\qR_{12})$ and $L\in\CM(\qR_{34})$. We can decompose $N$ and $L$ as
 \begin{equation}\label{NL} 
 \begin{split}
 M&\simeq n_1\qR_1\+ n_2\qR_2\+ n_{12}\qR_{12},\\
  L&\simeq m_3\qR_3\+ m_4\qR_4\+ m_{34}\qR_{34}.
 \end{split}
 \end{equation}
 Then an element $\xi\in\Ext^1_\qR(N,L)$ is presented by a block matrix
 \begin{equation}\label{e21} 
   \rX=\mtr{  \rX^1_3 & \rX^2_3 & \rX^{12}_3 \\
  				\rX^1_4 & \rX^2_4 & \rX^{12}_4 \\	
  				\rX^1_{34} & \rX^2_{34} & \rX^{12}_{34} },
 \end{equation}
 where $\rX^s_r$ is an $m_s\xx n_r$ matrix with elements from $\Ext^1_\qR(\qR_s,\qR_r)$ (here and later $r\in\{3,4,34\},\,s\in\{1,2,12\}$).
 So we must calculate $\Ext^1_\qR(\qR_s,\qR_r)$. Let $\qI_s$ be the kernel of the projection $\qR\to\qR_s$, so we have the exact
 sequence $0\to \qI_s\to \qR\to \qR_s\to0$, whence
  \[
  \Ext^1_\qR(\qR_s,L)\simeq \Hom_\qR(\qI_s,L)/L,
 \]
 where $L$ is emedded to $\Hom_\qR(\qI_s,L)$ if we identify an element $u\in L$ with the homomorphism $a\mapsto au$.
  
 One easily sees that $\qI_1=y\qR,\ \qI_2=x\qR$ and $\qI_{12}=xy\qR$. Hence, if we consider $L$ as embedded into 
 $\qK_{34}\*_\qR L$, we obtain the identifications
 \begin{equation}\label{ext} 
 \begin{split}
   \Ext^1_\qR(\qR_1,L)&=y^{-1}L/L\simeq L/yL,\\
 \Ext^1_\qR(\qR_2,L)&=x^{-1}L/L\simeq L/xL,\\
 \Ext^1_\qR(\qR_{12},L)&=(xy)^{-1}L/L\simeq L/xyL.
 \end{split}
 \end{equation}
 In the table nearby we present bases of the modules $\Ext^1_\qR(\qR_s,\qR_r)$. In this table $1_r$ denotes the residue class of
 the identity element of $\qR_r$ in the corresponding quotient module, $t_r$ denotes its $t$-multiple.
 \[
  \begin{array}{|c|c|c|c|}
  \hline 
 \hhh & \qR_1 & \qR_2 & \qR_{12} \\
  \hline
 \hhh \qR_3 & 1_3 & 1_3 &  1_3,\,t_3 \\
 \hline
\hhh  \qR_4 & 1_4 & 1_4 & 1_4,\,t_4 \\
  \hline
\hhh   \qR_{34}  & 1_{34}, & 1_{34},  & 1_{34},t_3,\,t_4, \\
  & t_3=-t_4  & t_3=-\la t_4&  t^2_3=-\la t^2_4 \\
  \hline
  \end{array}
 \] 
 
 The formulae \eqref{ext} imply that if $\al:L\to L'$ and $\im\al\sbe xyL'$, then $\al\xi=0$ for every $\xi\in\Ext^1_\qR(N,L)$. In the same way, if 
 $\be:N'\to N$ and $\im\be\sbe xyN$, then $\xi\be=0$ for every $\be\in\Ext^1_\qR(N,L)$. Therefore, if we are
 interesting in classification of elements of the bimodule $\Ext^1_\qR$ up to isomorphism, we can replace 
 $\Hom_\qR(\qR_a,\qR_b)$ by $\qE_{ab}=\Hom_\qR(\qR_a,\qR_b)/\Hom_\qR(\qR_a,xy\qR_b)$, where
 $a,b\in\{1,2,3,4,12,34\}$.
 An easy calculation gives the following values of $\qE_{ab}$:
  \begin{align*}
& \qE_{12,12}\simeq\qE_{34,34}\simeq \qE',\\
 & \qE_{r,r}\simeq \qE_{s,s}\simeq \qE,\\
 & \qE_{12,s}\simeq\qE_{34,r}\simeq \qE,\\
 & \qE_{s,12}\simeq \qE_{r,34} \simeq t\qE,
  \end{align*}  
 where $r\in\{3,4\},\,s\in\{1,2\}$, $\qE=\qS/(t^2),\,\qE'=\aK[[t_3,t_4]]/(t_3t_4)$. Obviously, all other values of $\qE_{ab}$ are zero. 
 
 Therefore, two matrices $\rX$ and $\rX'$ of the form \eqref{e21} describe isomorphic 
 modules \iff $S\rX=\rX' T$, where $S=(S^j_i),\ i,j\in\{3,4,34\}$ and $T=(T^j_i),\ i,j\in\{1,2,12\}$ are invertible $3\xx3$ block matrices 
 such that $S^3_3,S^4_4,S^{34}_3,S^{34}_4$, as well as $T^1_1,T^2_2,T_1^{12},T_2^{12}$ are with elements from $\qE$, 
 $S^3_4,S^4_3,T^1_2,T^2_1$ are zero, $S^{34}_{34}$ and $T^{12}_{12}$ are with elements from $\qE'$ and  
 $S^3_{34},S^4_{34}$, as well as $T^1_{12},T^2_{12}$ are with elements from $t\qE$. Symbolically:
 \[
  S,T\in\mtr{\qE&0&\qE\\ 0&\qE&\qE\\ t\qE&t\qE&\qE'}.
 \]

 \section{Modules of the first level}
 \label{s3}
 
 We say that an $\qR$-module $M$ defined by a matrix $\rX$ of the form \eqref{e21} is \emph{of the first level} if all matrices $\rX^s_r$
 are with entries from $\aK1_r$ (i.e. $t$ does not occur). Obviously, if $M'$ is another module of the first level defined by a matrix
 $\rX'$, then $M\simeq M'$ \iff $S\rX= \rX'T$, where $S,T$ are as above but with elements from $\aK$. It means that we can do elementary
 transformations inside each vertical or horizontal stripe and we can add rows (columns) of the third horizontal (vertical) stripe to the 
 rows of the first two stripes. This matrix problem can be considered as \emph{representations of a bunch of chains} in the sense of 
 \cite{bo} or \cite[Appendix~B]{db} (we use the formulation of the second paper). Namely, we have a pairs of chains:
 \[
 \kE=\{e_2<e_1\},\ \kF=\{f_2<f_1\}
 \]
 with the relation $\sim$: $e_1\sim e_1,\ f_1\sim f_1$. Namely, $e_1$ refers to the first and the second horizontal stripes, $e_1\sim e_1$
 means that there are no transformations between these stripes, $e_2$ refers to the third horizontal stripe and $e_2<e_1$ means that we 
 can add the rows of the third stripe to the rows of the first two stripes. In the same way $f_1$ refers to the first and the second vertical
 stripes and $f_2$ refers to the third vertical stripe. 
 
 Now we use the description of the indecomposable representations of this bunch of chains from \cite{bo,db}. In our case they correspond
 to the following words in the alphabeth $\{e_1,e_2,f_1,f_2,-,\sim\}$:
 \begin{itemize}
 \item   one cycle $\fW_0=e_1\sim e_1-f_1\sim f_1$;
 
 \item  one bispecial word $\fW_1=\underline{e_1}-\underline{f_1}$;
 
 \item  $8$ types of special words:
 	\begin{align*}
 	 \fW_2(n)&=(e_1\sim e_1-f_1\sim f_1)^n-\underline{e_1};\\
 	\fW_3(n)&=f_2-(e_1\sim e_1-f_1\sim f_1)^n-\underline{e_1};\\
 	\fW_4(n)&=(e_1\sim e_1-f_1\sim f_1)^n-e_1\sim e_1-\underline{f_1};\\
 	\fW_5(n)&=f_2-(e_1\sim e_1-f_1\sim f_1)^n-e_1\sim e_1-\underline{f_1}
 	\end{align*}
 	and the words $\fW_i^\top(n)\ (2\le i\le5)$ (\emph{transposed} to $\fW_i(n)$) obtained from the words $\fW_i(n)$ by replacing $e$ by $f$ 
 	and vice versa. The special ends of the words are underlined.
 	
 \item $7$ types of usual words:
 \begin{align*}
 \fW_6(n)&=(e_1\sim e_1-f_1\sim f_1)^n;\\
 \fW_7(n)&=(e_1\sim e_1-f_1\sim f_1)^n-e_2;\\
  \fW_8(n)&=(e_1\sim e_1-f_1\sim f_1)^n-e_1\sim e_1-f_2;\\
 \fW_9(n)&=f_2-(e_1\sim e_1-f_1\sim f_1)^n-e_2.
 \end{align*}
 and the transposed words $\fW_i^\top(n)\ (6\le i\le9)$ obtained from the words $\fW_i(n)$ by replacing $e$ by $f$ and vice versa. 
 \end{itemize}
 Here $\fW^n$ means $\underbrace{\fW-\fW-\dots-\fW}_{m\text{ times}}$. In all cases, except $\fW_6(n)$ and $\fW_9(n)$, 
 $n\in\mN\cup\{0\}$, while $n\in\mN$ in $\fW_6(n)$ and $\fW_9(n)$. Note that the words transposed to 
 $\fW_0,\,\fW_1,\,\fW_6(n),\,\fW_9(n),\,\fW_{10}$ coincide with their inverse words. Therefore, they give isomorphic representations, 
 so shall not be considered.
 
 \medskip
 Following the construction of indecomposable representations from \cite{bo}, we construct the matrices corresponding
 to these words. Recall that for a specail word we must also add a mark $\de\in\{+,-\}$, for a bispecial word we must add two marks 
 $\de_1,\de_2\in\{+,-\}$ and a size $n\in\mN$, for a cycle we must add a size $n$ and an eigenvalue  $\mu\notin\{0,1\}$.
 So we obtain the matrices presented in Table~1 below.
 {\scriptsize\begin{table}
 \caption{}\vspace*{-1em}
 \begin{align*} 
 \rX_0(n,\mu)&=\left(\begin{array}{c|c} I_n&J_n(\mu) \\  \hline \hhh I_n&I_n \end{array}\right)\quad (\mu\in\aK\setminus\{0,1\}); \notag\\
 \rX_1(n)^{++}&=\left(\begin{array}{c|c} I_m&J_m(0) \\  \hline \hhh I_m&I_m \end{array}\right)\quad \text{if } n=2m; \notag\\
 \rX_1(n)^{++}&=\left(\begin{array}{cc|c} I_m&0&J_m(0)\\ 0 &1&\fE_m \\  \hline \hhh I_m&0&I_m \end{array}\right)\quad \text{if } n=2m+1;
 							 \notag\\
 \rX_2(n)^+&=\left(\begin{array}{c|c} I_n&J_n(1) \\ 0&\fE_n \\  \hline \hhh I_n&I_n \end{array}\right); \notag\\
 \rX_3(n)^+&=\left(\begin{array}{cc|cc|c} I_n&0&I_n&0&0 \\ 0&1&0&1&0 \\ 0&0&0&0&1\\  
 					\hline \hhh J_n(1)&\fE_1^\top&I_n&0&0\\ \fE_n&0&0&0&1 \end{array}\right); \notag\\
 \rX_4(n)^+&=\left(\begin{array}{cc|c} I_n&0&J_n(1) \\ 0&1&\fE_n \\  \hline \hhh I_n&0&I_n \\ 0&1&0 \end{array}\right); \notag\\
 \rX_5(n)^+&=\left(\begin{array}{cc|c|c} I_n&\fE_1^\top&J_n(1)& 0 \\ 0&0&\fE_n&1 \\  \hline \hhh I_n&0&I_n&0\\
 					0&0&0&1  \end{array}\right);\\.
 \rX_6(n)&=\left(\begin{array}{c|c} I_n&J_n(1) \\  \hline \hhh I_n&I_n \end{array}\right); \notag\\
 \rX_7(n)&=\left(\begin{array}{ccc|ccc} I_m&0&0&J_m(1)&0&0\\ 0&I_k&0&0&I_k&0\\ 0&(1-\eps)\fE_k&0&\fE_m&0&0\\ \hline \hhh
 			  I_m&0&0 &I_m&0&0\\ 0&J_k(1)& \fE_1^\top&0&I_k& 0 \\ 0&\fE_k&0& \eps\fE_m&0&0 \\  \hline \hhh 
 				0&0&1&0&0&1  \end{array}\right), \notag\\
 		          &\hspace*{2em} \text{where } m=[n/2],\ k=n-m-1,\\
 				 &\hspace*{2em}\eps=1 \text{ if $n$ is odd and $\eps=0$ if $n$ is even};\\
  \rX_8(n)&=\left(\begin{array}{c|c|c} I_n&J_n(1)&0 \\ 0&\fE_n&1 \\  \hline \hhh I_n&I_n&0 \\ 0&0&1 \end{array}\right); \notag\\
 \rX_9(n)&=\left(\begin{array}{cc|cc|c} I_n&0&I_n&0&0\\ 0&0&0&0&1\\ \hline \hhh J_n(1)&\fE_1^\top&I_n&0&0 \\ \fE_n&0&0&0&1 \\  
 \hline \hhh 0&1&0&1&0  \end{array}\right). \notag\\
 \rX_{10}&=\mtr{1}. \notag
 \end{align*}
 \end{table}}
 In this table $I_n$ is the unit $n\xx n$ matrix, $J_n(\mu)$ is the lower $n\xx n$ Jordan matrix with the eigenvalue $\mu$, $\fE_n=(0,0,\dots,0,1)$,
 $\fE_1=(1,0,\dots,0)$ and $^\top$ means the transposition. If $n=0$, $m=0$ or $k=0$, the corresponding rows and columns are absent. 
 All matrices are subdivided just as the matrix $\rX$ in \eqref{e21}. If there are only two vertical (horizontal) stripes, they correspond
 to the first two vertical (horizontal) stripes of $\rX$ (no $\rX^{12}_r$ or $\rX^s_{34}$). The only exception is $\rX_{10}$, which consists 
 of the unique part $\rX^{12}_{34}$. If we change the mark $+$ to $-$, we interchange the first two stripes (horizontal if the special end is 
 $e_1$ and vertical if it is $f_1$). Then we call the new matrices the \emph{$\pm$-modidfications}. The matrices corresponding to the 
 transposed words are just the transposed matrices. 
 
 We denote by $M_k(n)$, adding $+$ or $-$ if necessary, the module corresponding to the matrix $\rX_k(n)$ from Table~1 and their
 $\pm$-modifications. By $M^*_k(n)$ we denote the module corresponding to $\rX_k(n)^\top$. Thus we have the following result. 
 
 \begin{theorem}\label{main} 
 Any $\qR$-module of the first level decomposes into a direct sum of modules $M_k(n)$ and $M^*_k(n)$.
 \end{theorem}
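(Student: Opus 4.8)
The plan is to reduce the statement to the classification of representations of bunches of chains and then to render that classification explicit. First I would set up the matrix problem precisely. By Theorem~\ref{bimod} and the computations of Section~\ref{s2}, a first-level module $M$ is given by a matrix $\rX$ as in \eqref{e21} whose entries lie in $\aK1_r$, and $M\simeq M'$ \iff $S\rX=\rX'T$ for invertible block matrices $S,T$ over $\aK$ of the shape displayed at the end of Section~\ref{s2}. Concretely, the admissible transformations are: arbitrary invertible row operations inside the first two horizontal stripes and inside the third horizontal stripe separately, plus additions of rows of the third horizontal stripe to rows of the first two; and the analogous column operations for the three vertical stripes. I would then verify, straight from the definitions in \cite{bo} or \cite[Appendix~B]{db}, that this is exactly the representation category of the bunch of chains with chains $\kE=\{e_2<e_1\}$, $\kF=\{f_2<f_1\}$ and relation $e_1\sim e_1$, $f_1\sim f_1$: the symbol $e_1$ encodes the pair of glued horizontal stripes between which no transformations are permitted (that is the meaning of $e_1\sim e_1$), $e_2$ encodes the third horizontal stripe, $e_2<e_1$ records that stripe three may be added to the first two, and symmetrically for $f_1,f_2$. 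This is the content of the paragraph preceding the theorem and amounts to a routine check of the incidence data.

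Next I would invoke the main structure theorem for representations of bunches of chains \cite{bo,db}: every representation is a finite direct sum of indecomposables, and the indecomposables are parametrized by the admissible words (strings and bands) in the alphabet $\{e_1,e_2,f_1,f_2,-,\sim\}$, each equipped with its discrete data — a length $n\in\mN$ (or $\mN\cup\{0\}$), marks $\de,\de_1,\de_2\in\{+,-\}$ at the special and bispecial ends, and an eigenvalue $\mu\notin\{0,1\}$ for a cycle. Because the bunch is so small, enumerating all admissible words up to inversion is a finite combinatorial task; it yields precisely the cycle $\fW_0$, the bispecial word $\fW_1$, the special words $\fW_2,\dots,\fW_5$ together with their transposes, the usual words $\fW_6,\dots,\fW_9$ together with their transposes, and the exceptional word $\fW_{10}$, with the words transposed to $\fW_0$, $\fW_1$, $\fW_6(n)$, $\fW_9(n)$, $\fW_{10}$ discarded since they coincide with their own inverses. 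Finally, applying the canonical-form recipe of \cite{bo} to each of these words produces the block matrices of Table~1: this is a mechanical reading-off of the matrix of a string or band module — each $\sim$-pair contributes a coupled identity/Jordan block, each connecting symbol $-$ an overlap of two stripes, and the $\pm$-marks select which of the two glued stripes is used at a special end. By definition $M_k(n)$ is the module attached to $\rX_k(n)$ and its $\pm$-modifications, and $M_k^*(n)$ the one attached to $\rX_k(n)^\top$, so the decomposition just obtained is exactly the asserted one.

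The difficulty here is organizational rather than conceptual. The two points demanding care are (a) matching the admissible transformations on $\rX$ with the bunch-of-chains formalism under the correct $\sim$ and $<$ relations, so that the set of words really is the one listed and nothing is missing; and (b) translating each word into the normal forms of Table~1 without omission or duplication — in particular handling the parity split $n=2m$ versus $n=2m+1$ in $\fW_1$, the assignments $m=[n/2]$, $k=n-m-1$ and the flag $\eps$ in $\fW_7$, and the degenerate cases $n=0$ (or $m=0$, $k=0$) where the corresponding rows and columns of the matrices disappear.
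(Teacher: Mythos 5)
Your proposal is correct and follows essentially the same route as the paper: reduce to the matrix problem of Section~\ref{s2}, recognize it as representations of the bunch of chains $\kE=\{e_2<e_1\}$, $\kF=\{f_2<f_1\}$ with $e_1\sim e_1$, $f_1\sim f_1$, invoke the classification of \cite{bo,db}, enumerate the word types (discarding those whose transposes coincide with their inverses), and read off the canonical matrices of Table~1. The paper likewise treats the theorem as the summary of exactly this chain of reductions, so there is nothing to add.
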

 
 \section{Generators and relations}
 \label{s4} 
   
  Now we calculate matrix factorizations of the polynomial $F=XY(X-Y)(X-\la Y)$ corresponding to the indecomposable modules of the
  first level. In other words, we find minimal sets of generators for these modules and minimal sets of relations for these generators.
 We present detailed calculations for the case of the modules corresponding to the matrix $X^+_2(2)$ and its transposed. 
 It is quite typical though not very cumbersome. In order to make smaller the arising matrices, we denote by $z_i\ (1\le i\le 4)$ 
 the generators of the kernel of the projection  $\qR\to\qR_i$, that is $z_1=y,\,z_2=x,\,z_3=x-y,\,z_4=x-\la y$. Thus $F=z_1z_2z_3z_4$.
 
 Note that, if we have a decomposition like \eqref{NL} of the modules $N$ and $L$ and choose generators $v^s_j$ of the summands 
 $\qR^s$ of $N$ and $u^r_i$ of the summands $\qR^r$ of $L$, then $\{v^s_j,u^r_i\}$ is a set of generators of $M$. In this case the relation for
 $u^r_i$ is $z_ru_i=0$. The relations for $v_i^s$ are obtained from the $j$-th columns of the matrices $\rX^s_r$. Namely, if the 
 coefficients  of these columns are $\xi^r_i\in\Ext^1_\qR(\qR_s,\qR_r)$, where $\xi^r_i$ are identified with elements from 
 $\qR_r/z_s\qR_r$ as in \eqref{ext}, then the relation for $v^s_j$ is 
 \[
  z_sv^s_j=\sum_{r,i} \xi_i^r u^r_i.
 \]
 
 As the matrix $\rX^+_2(2)$ equals 
 {\scriptsize
 \[
 \left( \begin{array}{cc|cc}
  1&0 & 1&0 \\ 0&1 & 1&1 \\ 0&0 &0&1 \\
  \hline
  \hhh 
 1&0 & 1&0 \\ 0&1 & 0&1
  \end{array} \right)
 \]}%
 generators for the module $M_2^+(n)$ are $v^1_1,v^1_2,v^2_1,v^2_2;u^3_1,u^3_2,u^3_3,u^4_1,u^4_2$ with the relations
 {\small
\begin{align*}
 z_3u^3_i&=0, & z_4u^4_i&=0,\\
 z_1v^1_1&=u^3_1+u^4_1, & z_2v^2_1&=u^3_1+u^3_2+u^4_1=z_1v^1_1+u^3_2,\\
 z_1v^1_2&=u^3_2+u^4_2, & z_2v^2_2&=u^3_2+u^3_3+u^4_2=z_1v^1_2+u^3_3.
\end{align*}}%
One easily sees that the elements $u^3_2,u^3_3,u^4_1,u^4_2$ can be excluded from the list of relations. Afterwords, the relations
 become
 {\small
 \begin{align*}
 &z_3u^3_1=0, && \\
 &z_4(z_1v^1_1-u^3_1)=0, & z_3(z_2v^2_1-z_1v^1_1)&=0,\\
 &z_4(z_1v^1_2-z_2v^2_1+z_1v^1_1)=0, & z_3(z_2v^2_2-z_1v^1_2)&=0.
 \end{align*}}%
 Therefore, if we order the remaining generators as $u^3_1,v^1_1,v^2_1,v^1_2,v^2_2$, the matrix of relations becomes
 {\scriptsize
 \[
  \Phi=\mtr{z_3 &0&0&0&0 \\ -z_4 & z_1z_4 & 0&0&0 \\ 0 & -z_1z_3 & z_2z_3 & 0&0 \\
  		0& z_1z_4 & -z_2z_4 & z_1z_4 & 0 \\ 0&0&0& -z_1z_3 & z_2z_3 }.
 \]}%
 As all coefficients are from the radical, this presentation is minimal.
 The matrix $\Psi$ giving the matrix factorization $\Psi\Phi=FI_5$ can be easily calculated:
 {\scriptsize
 \[
  \Psi=\mtr{z_1z_2z_4 &0&0&0&0\\ z_2z_4 &z_2z_3 &0&0&0 \\ z_1z_4 & z_1z_3 & z_1z_4 & 0&0\\
  				0&0& z_2z_4 & z_2z_3 & 0 \\ 0&0& z_1z_4 & z_1z_3 & z_1z_4 }.
 \]}
 
 The matrix $\rX^+_2(2)^\top$ equals
 {\scriptsize
 \[
  \left(\begin{array}{ccc|cc}
  1&0&0& 1&0\\ 0&1&0 & 0&1\\ \hline \hhh 1&1&0 & 1&0\\ 0&1&1 &1&1
  \end{array}\right)
 \]}%
 Here all generators $u^r_i$ can be expressed through $v^s_j$. Arranging the latter as $v^1_3,v^2_2,v^1_2,v^2_1,v^1_1$,
 we obtain the  matrix of relations for the module $M^*_2(n)^+$:
 {\scriptsize
 \[
  \Phi^*=\mtr{z_1z_4 &0&0&0&0 \\ -z_1z_3 & z_2z_3 & 0&0&0\\ 0& -z_2z_4 & z_1z_4 & 0&0\\
  					0& z_2z_3 & -z_1z_3 & z_2z_3 & 0 \\ 0&0&0& -z_2 & z_1 }
 \]}%
 The matrix $\Psi^*$ such that $\Psi^*\Phi^*=FI_5$ equals
 {\scriptsize
 \[
  \Psi^*=\mtr{z_2z_3 &0&0&0&0 \\ z_1z_3 & z_1z_4 & 0&0&0\\ z_2z_3& z_2z_4 & z_2z_3 & 0&0\\
  					0& 0 & z_1z_3 & z_1z_4 & 0 \\ 0&0&z_2z_3 & z_2z_4 & z_2z_3z_4 }
 \]}%
 The same calculations can be done for the modules corresponding to the matrices $X^+_2(n)$ and $X^+_2(n)^\top\!$, which gives the
 matrices of relations, respectively,
 {\scriptsize
 \begin{align*}
 &  \Phi_2(n)^+=\mtr{ z_3 & 0&0&0&0&0& \dots & 0&0 \\
   				-z_4 & z_1z_4& 0&0&0&0&\dots&0&0\\
   				0& -z_1z_3 & z_2z_3 & 0&0&0&\dots&0&0\\
   				0& z_1z_4 &-z_2z_4& z_1z_4&0 &0&\dots&0&0\\
   				0&0&0& -z_1z_3 & z_2z_3&0 &\dots& 0&0\\
   				0&0&0&z_1z_4& -z_2z_4 &z_1z_4&\dots&0&0\\
   				\hdotsfor{9}\\
   				0&0&0&0&0&0&\dots & -z_1z_3& z_2z_3}\\
\intertext{\normalsize and}
 & \Phi^*_2(n)^+=\mtr{z_1z_4 &0&0&0&\dots &0&0&0&0\\
 				-z_1z_3 &z_2z_3 &0&0& \dots & 0&0&0&0 \\   0&-z_2z_4 & z_1z_4 &0 &\dots &0&0&0&0 \\
 				0&z_2z_3 & -z_1z_3 & z_2z_3 &\dots& 0&0&0&0 \\ \hdotsfor9 \\
 				0&0&0&0& \dots &  -z_2z_4 & z_1z_4 &0 &0\\ 0&0&0&0&\dots& z_2z_3 & -z_1z_3 & z_2z_3 & 0\\
 				0&0&0&0& \dots & 0&0& -z_2& z_1}.
 \end{align*}}%
 of size $(2n+1)\xx(2n+1)$.
 
  Quite analogously one can calculate matrices of relations $\Phi_k(n)$ and $\Phi^*_k(n)$ for the modules,respectively, $M_k(n)$ and 
  $M_k^*(n)$. The results are given in Table~2 for $0\le k\le 9$. The module $M_{10}$ is actually the regular module and its 
  matrix of relations is just $(F)$. Most of these matrices are modifications of $\Phi_2(n)^+$ and $\Phi_2^*(n)^+$, so we only mention 
  the necessary changes. For instance, the matrix $\Phi_3(n)^+$ described in Table~2 is indeed
  {\scriptsize
  \[
   \Phi_3(n)^+=\mtr{	z_2z_3 &0&0& \dots & 0&0&0 \\   -z_2z_4 & z_1z_4 &0 &\dots &0&0&0 \\
 				z_2z_3 & -z_1z_3 & z_2z_3 &\dots& 0&0&0 \\ \hdotsfor7 \\
 				0&0&0& \dots &  -z_2z_4 & z_1z_4 &0 \\ 0&0&0&\dots& z_2z_3 & -z_1z_3 & z_1z_2z_3 }
  \]}%
  of size $(2n+3)\xx(2n+3)$.
    If we change the mark $+$ to $-$, one must interchange the multipliers $z_1$ and $z_2$ if the special end was $e_1$ and interchange the 
 multipliers $z_3$ and $z_4$ if the special end was $f_1$. 

\def\fff{\phantom{^+}}
    \begin{table}[!ht]
 \caption{}\vspace*{-1em}
 {\scriptsize
 \begin{align*}
 \Phi_0(n,\mu)&=\mtr{\phi(\mu) & 0 & 0 &\dots & 0& 0\\ \phi'(\mu) & \phi(\mu) & 0 &\dots &0 &0 \\ 
 				0& \phi'(\mu) & \phi(\mu) &\dots &0&0\\ \hdotsfor6\\
 				0&0&0&\dots &\phi(\mu) & 0\\ 0&0&0&\dots& \phi'(\mu)&\phi(\mu) },\\
 				&\text{where }\ \phi(\mu)=\mtr{z_1z_3 & -z_2z_3 \\ -\mu z_1z_4 & z_2z_4 }\!,\
 				\phi'(\mu)=(\mu-1)^{-1}\mtr{z_1z_3&-z_2z_3 \\ -z_1z_3&z_2z_3};\\
 \Phi_1(n)^{++}&=\mtr{z_2z_4 & 0&0&\dots &0&0 \\
 						-z_2z_3 & z_1z_3 & 0&\dots&0&0\\
 						z_2z_4& -z_1z_4 &z_2z_4 &\dots	&0&0\\
 						\hdotsfor6\\
 						z_2z_4&-z_1z_4 &z_2z_4&\dots& z_2z_4 &0\\
 						-z_2z_3 & z_1z_3 &-z_2z_3 &\dots & -z_2z_3& z_1z_3}\\
  &\ \text{if $n$ is even; if $n$ is odd, just cross out the first row and the first column};	\\
  \Phi_3(n)^+&\text{ is obtained from $\Phi^*_2(n+2)^+$ by deleting the first and the last rows and columns}\\
  &\text{ and then multiplying the last column by $z_1$};\\
  \Phi_3^*(n)^+&\text{ is obtained from $\Phi_2(n+2)^+$ by deleting the first and the last rows and columns}\\
  &\text{ and then multiplying the first row by $z_3$};\\
  \Phi_4(n)^+&\text{ is obtained from $\Phi_2(n+1)^+$ by deleting the last row and the last column};	\\
  \Phi^*_4(n)^+&\text{ is obtained from $\Phi^*_2(n+1)^+$ by deleting the first row and the first column};	\\
  \Phi_5(n)^+&\text{ is obtained from $\Phi_4(n)^+$ by multiplying the first column by $z_1$}\\ &\text{ and the last column by $z_2$};\\
  \Phi_5^*(n)^+&\text{ is obtained from $\Phi^*_4(n)^+$ by multiplying the first and the last rows by $z_4$};\\
  \Phi_6(n)\fff &\text{ is obtained from $\Phi_2(n)^+$ by dividing the last row by $z_3$}\\
  \Phi_7(n)\fff &=
  \begin{cases}
   \mtr{\Phi_2(m)^+ & 0 \\ \fU_7 & \Phi'_7(k)} & \text{ if $n$ is even},\\
   \mtr{\Phi'_7(k)&0 \\ \fU'_7 &\Phi_2(m)^+} & \text{ if $n$ is odd},
  \end{cases} \\
  &\text{ where $\fU_7$ and $\fU_7'$ are zero except the last row which is}\\    
  & \text{ $z_2z_4\fE_{2m+1}$ for $\fU_7$ and $z_1z_3\fE_{2k+1}$ for $\fU'_7$, while}\\ 
  &\text{ $\Phi'_7(k)$ is obtained from $\Phi^*_4(k)^+$ by multiplying the first and the last rows by $z_4$};\\
   \Phi^*_7(n)\fff &=
  \begin{cases}
   \mtr{\Phi_2^*(m)^+ & 0 \\ \fU^*_7 & {\Phi^*_7}'(k)} & \text{if $n$ is odd},\\
   \mtr{{\Phi^*_7}'(k)&0 \\ {\fU^*_7}' &\Phi_2^*(m)^+} & \text{if $n$ is even},
  \end{cases} \\
  &\text{ where $\fU^*_7$ and ${\fU^*_7}'$ are zero except the first column which is}\\    
  & \text{ $-z_1z_3\fE_1^\top$ for $\fU^*_7$ and $-z_2z_4\fE_1^\top$ for ${\fU^*_7}$, while}\\ 
  &\text{ ${\Phi^*_7}'(k)$ is obtained from $\Phi_4(k)^+$ by multiplying the first and the last column by }z_2 ;\\
  \Phi_8(n)\fff &\text{ is obtained from $\Phi_4(n)^+$ by multiplying the last column by $z_2$};\\
  \Phi^*_8(n)\fff &\text{ is obtained from $\Phi_4^*(n)^+$ by multiplying the first row by $z_4$};\\
 \Phi_9(n)\fff &\text{ is obtained from $\Phi^*_8(n+1)$ by deleting the last column and the last row}\\
 			&\text{ and then multiplying the last column by $z_1$.}
\end{align*}}
  \end{table}
  
\section{Remarks}

Unfortunately, we cannot give an internal characterization of modules of the first level. Nevertheless, the following conjecture seems
very plausible.

\begin{conj}\label{conj} 
 If $M$ is an indecomposable module of the first level, its Auslander--Reiten translation $\tau M$ is also of the first level.
\end{conj}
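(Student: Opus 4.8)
The plan is to reduce the statement to a finite verification with the matrix factorizations constructed in Section~\ref{s4}. Since $\qR$ is a one--dimensional reduced Gorenstein complete local ring with an isolated singularity, it is classical that on $\CM(\qR)$ the Auslander--Reiten translation coincides with the (minimal) syzygy functor, $\tau M\simeq\Omega M$ for every non--free indecomposable $M$ (the twist by the canonical module being trivial, as $\omega_\qR\simeq\qR$; this follows from Auslander's formula $\tau M=\Hom_\qR(\Omega\mathrm{Tr}\,M,\omega_\qR)$ together with the $2$--periodicity of matrix factorizations). Moreover, over the hypersurface $\qR=\qS/(F)$, if $(\Phi,\Psi)$ is the matrix factorization attached to $M$, so that $M\simeq\operatorname{coker}\Phi$ and $\Phi\Psi=\Psi\Phi=F\cdot I$, then $\Omega M\simeq\operatorname{coker}\Psi$. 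Hence the conjecture is equivalent to the following assertion: for every indecomposable module of the first level $M=M_k(n)$ or $M=M_k^*(n)$ of Section~\ref{s3} --- excluding the regular module $M_{10}=\qR$, on which $\tau$ is undefined --- the module $\operatorname{coker}\Psi_M$, where $\Psi_M$ is the companion of the relation matrix $\Phi_M$ from Table~2, is again of the first level.

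First I would write down the matrices $\Psi_k(n)$ explicitly, in the same uniform shape in which Table~2 records the $\Phi_k(n)$; this is precisely the computation already carried out in Section~\ref{s4} for $\rX^+_2(2)$ and its transpose, where the companions $\Psi$ and $\Psi^*$ were exhibited, and it is routine. Next, for each family I would bring $\Psi_k(n)$ to ``first--level form'' by invertible row and column transformations \emph{over $\qS$}, applied simultaneously to $\Phi_k(n)$ in the compatible way, so as to recognize the pair $(\Psi_k(n),\Phi_k(n))$, up to equivalence of matrix factorizations, as some $(\Phi_j(m),\Psi_j(m))$ from Table~2; equivalently, one reconstructs from $\operatorname{coker}\Psi_k(n)$ the exact sequence $0\to L'\to\tau M\to N'\to 0$ of Theorem~\ref{bimod}, its matrix $\rX$ of the shape \eqref{e21}, and checks that every entry of $\rX$ lies in $\aK1_r$. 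Since $\tau M$ is again indecomposable, it must then equal one particular $M_j(m)$, and I would expect $\tau$ to act on the words $\fW_k(n)$ of Section~\ref{s3} by a single uniform combinatorial rule --- a permutation of the families together with a shift of the parameter $n$, in the spirit of the description of Auslander--Reiten sequences for bunches of chains in \cite{bo,db} --- so that, once the rule has been read off the small cases, its verification for all $k$ and all $n$ becomes mechanical. Half of the work is saved by observing that the transposed family $M_k^*(n)$ is the $\qR$--dual $M_k(n)^*=\Hom_\qR(M_k(n),\qR)$ and that this duality commutes with $\tau$ (since $\tau=\Omega=\Omega^{-1}$ over a hypersurface), so the behaviour of $\tau$ on the transposed families follows from its behaviour on the untransposed ones.

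The main obstacle is exactly the point flagged in the Remark: ``being of the first level'' has no intrinsic characterization, so there is no shortcut around the explicit reduction of each $\Psi_k(n)$. One must perform it with transformations over $\qS=\aK[[X,Y]]$, not with the $\aK$--elementary transformations of the matrix problem of Section~\ref{s3}, and the delicate part is to verify that after the reduction no entries outside $\aK1_r$ survive --- that is, that passing from $M$ to $\Omega M$ does not create genuine ``$t$--terms'' in the associated bimodule element. A more conceptual argument --- say, a filtration or weighted grading on $\qS$ under which both halves of the matrix factorizations of first--level modules remain ``of bounded complexity'' and the swap $\Phi\leftrightarrow\Psi$ preserves this --- would be preferable, but none is apparent, which is consistent with the absence of an internal characterization. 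I expect the discrete families (all of $\fW_1,\dots,\fW_9$ and their transposes) to go through without surprises; the one--parameter family $\fW_0(n,\mu)$, where $\Phi_0(n,\mu)$ genuinely involves the eigenvalue $\mu$ and, implicitly, the curve parameter $\la$, requires the reduction to be uniform in $(n,\mu)$, but being homogeneous in structure it should be the easiest rather than the hardest case.
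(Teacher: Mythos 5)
This statement is not proved in the paper at all: it is stated as an open conjecture, and the paper explicitly says in the surrounding Remarks that no internal characterization of ``first level'' is available. The paper only observes (citing \cite[Lemma 9.8]{yo}) that $\tau M=\operatorname{coker}\Psi$ where $\Phi\Psi=FI_d$, and then, \emph{assuming} the conjecture, reads off the action of $\tau$ on the families by comparing determinants of the matrices in Table~2. Your reduction of the conjecture to the assertion that each $\operatorname{coker}\Psi_k(n)$ is again of the first level is therefore exactly the paper's own starting point, not an advance beyond it.

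The difficulty is that your text is a plan rather than a proof. Every step that actually carries content is phrased as ``I would write down\dots'', ``I would bring\dots to first-level form\dots'', ``I would expect $\tau$ to act by a single uniform combinatorial rule\dots'', and you yourself identify the missing ingredient: there is no argument showing that reducing $\Psi_k(n)$ over $\qS$ produces a bimodule element with no genuine $t$-terms. That verification, uniformly in $n$ (and in $\mu$ for the family $\rX_0(n,\mu)$), is the entire content of the conjecture, and nothing in the proposal supplies it. Two further points need care if the plan is ever executed. First, the identification $M_k^*(n)\simeq\Hom_\qR(M_k(n),\qR)$ is asserted without justification: the star in Section~\ref{s3} refers to transposing the matrix $\rX_k(n)$ of the bimodule element (interchanging the roles of $\CM(\qR_{12})$ and $\CM(\qR_{34})$), not to transposing the relation matrix $\Phi_k(n)$, and indeed $\Phi_2^*(n)^+$ is visibly not the transpose of $\Phi_2(n)^+$; so compatibility of $*$ with $\qR$-duality and with $\tau$ is itself something to prove. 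Second, appealing to Auslander--Reiten theory for bunches of chains in \cite{bo,db} does not automatically transfer here, because the passage from the bunch-of-chains category to $\CM(\qR)$ is only an equivalence on the subcategory of first-level modules --- which is precisely what is in question. In short, the approach is reasonable and consistent with the paper's framework, but as written it leaves the conjecture exactly as open as the paper does.
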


 Recall that if a matrix $\Phi$ of size $d\xx d$ is the matrix of relations for a module $M$, $\Psi$ is such that $\Phi\Psi=FI_d$, then
 $\Psi$ is the matrix of relations for $\tau M$ \cite[Lemma 9.8]{yo}. In particular, $\tau^2=\mathrm{id}$. Note that $\det\Phi\det\Psi=F^d$.
If Conjecture~\ref{conj} is true, a simple comparison of determinants of matrices from Table~2 gives the following formulae for
the Auslander--Reiten translations of indecomposable modules of the first level:
\begin{align*}
 & \tau M=M \text{ if } M\in\{M_0(n,\mu),\,M_8(n),\,M^*_8(n)\},\\
 & \tau M_1(n)^{++}= M_1(n)^{--},\\
 & \tau M_1(n)^{+-}= M_1(n)^{-+},\\ 
 & \tau M_2(n)^\pm= M_3(n-1)^\mp,\\
 & \tau M_2^*(n)^\pm=M_3^*(n-1)^\mp,\\
 & \tau M_4(n)^\pm=M_5(n)^\mp,\\
 & \tau M_6(n)=M_9(n-1),\\
 & \tau M_7(n)=M_7^*(n).
\end{align*}
 For instance, the matrix $\Phi_4(n)^+$ is of size $(2n+2)\xx(2n+2)$ an $\det\Phi_n(4)^+=z_1^{n+1}z_2^{n}z_3^{n+1}z_4^{n+1}$. 
 Hence, if $\Psi$ is a matrix of relations for $\tau M_4(n)^+$, then $\det\Psi=z_1^{n+1}z_2^{n+2}z_3^{n+1}z_4^{n+1}$, 
 which only coincides with $\det\Phi_5(n)^-$.


 \end{document}